\newcommand{\bint}[1]{\boldsymbol{#1}}
\newcommand{\lb}[1]{\underline{#1}}
\newcommand{\ub}[1]{\overline{#1}}
\newtheorem{theorem}{Theorem}
\theoremstyle{definition}
\newtheorem{example}{Example}
\begin{document}
	
	\title{Interval Linear Programming under Transformations: Optimal Solutions and Optimal Value Range\thanks{The first two authors were supported by the Czech Science Foundation under Grant P403-18-04735S and by the Charles University, project GA UK No. 156317. The third author was supported by the Czech Science Foundation under Grant P403-17-13086S.}
	}
	%\subtitle{Do you have a subtitle?\\ If so, write it here}
	
\author{
        Elif Garajov\'a\footnote{
Charles University, Faculty  of  Mathematics  and  Physics,
Department of Applied Mathematics, 
Malostransk\'e n\'am.~25, 11800, Prague, Czech Republic, 
e-mail: \texttt{elif@kam.mff.cuni.cz}
}
\and
        Milan Hlad\'ik\footnote{
Charles University, Faculty  of  Mathematics  and  Physics,
Department of Applied Mathematics, 
Malostransk\'e n\'am.~25, 11800, Prague, Czech Republic, 
e-mail: \texttt{hladik@kam.mff.cuni.cz}
}
\and
	Miroslav Rada\footnote{
University of Economics, Faculty of Finance and Accounting, Department of Financial Accounting and Auditing, W. Churchill's Sq. 4, 130 67 Prague, Czech Republic,
e-mail: \texttt{miroslav.rada@vse.cz}
}
}

\maketitle
	
	\begin{abstract}
		Interval linear programming provides a tool for solving real-world optimization problems under interval-valued uncertainty. Instead of approximating or estimating crisp input data, the coefficients of an interval program may perturb independently within the given lower and upper bounds. However, contrarily to classical linear programming, an interval program cannot always be converted into a desired form without affecting its properties, due to the so-called dependency problem.
		
		In this paper, we discuss the common transformations used in linear programming, such as imposing non-negativity on free variables or splitting equations into inequalities, and their effects on interval programs. Specifically, we examine changes in the set of all optimal solutions, optimal values and the optimal value range. Since some of the considered properties do not holds in the general case, we also study a special class of interval programs, in which uncertainty only affects the objective function and the right-hand-side vector. For this class, we obtain stronger results. 
	\end{abstract}
	
\emph{Keywords.} Interval linear programming; Optimal set; Optimal value range; Transformations

	%% For one-column wide figures use
	%\begin{figure}
	%% Use the relevant command to insert your figure file.
	%% For example, with the graphicx package use
	%  \includegraphics{example.eps}
	%% figure caption is below the figure
	%\caption{Please write your figure caption here}
	%\label{fig:1}       % Give a unique label
	%\end{figure}
	%%
	%% For two-column wide figures use
	%\begin{figure*}
	%% Use the relevant command to insert your figure file.
	%% For example, with the graphicx package use
	%  \includegraphics[width=0.75\textwidth]{example.eps}
	%% figure caption is below the figure
	%\caption{Please write your figure caption here}
	%\label{fig:2}       % Give a unique label
	%\end{figure*}
	%
	% For tables use
	%\begin{table}
	%% table caption is above the table
	%\caption{Please write your table caption here}
	%\label{tab:1}       % Give a unique label
	% For LaTeX tables use
	%\begin{tabular}{lll}
	%\hline\noalign{\smallskip}
	%first & second & third  \\
	%\noalign{\smallskip}\hline\noalign{\smallskip}
	%number & number & number \\
	%number & number & number \\
	%\noalign{\smallskip}\hline
	%\end{tabular}
	%\end{table}
	
	\section{Introduction}\label{sec:intro}
	
	When handling real-world optimization problems by means of mathematical modeling, it is often necessary to treat inexact or uncertain input data due to various measurement errors or estimations. Throughout the years, several approaches for solving optimization problems with imprecise data have emerged, based on different sources of uncertainty and different requirements imposed on the solutions. This paper adopts the approach of interval linear programming, which can be considered a special case of multi-parametric optimization with no dependencies among the coefficients. Interval-valued coefficients have been used for modeling uncertainty in many practical applications, such as portfolio selection \cite{Lai:2002, Kumar:2016}, environmental management \cite{Tan:2010, Cheng:2015} or group decision making \cite{Groselj:2017}.
	
	In interval optimization, it is assumed that the coefficients may perturb independently within the given lower and upper bounds. However, rather than focusing on the worst case and trying to find a stable solution (as in robust optimization), we aim to cover the properties of all possible scenarios. This approach leads to questions such as describing the set of feasible solutions over all scenarios \cite{Oettli:1964, Rohn:InexactLP:2006} and the set of all optimal solutions \cite{Allahdadi:2013:opt}, computing the optimal value range \cite{Mraz:1998, Rohn:InexactLP:2006b, Hladik:2009} or characterizing the duality gaps \cite{Novotna:2017}. For an overview of other results in interval linear programming see the survey by \cite{Hladik:2012}.
	
	This paper discusses an ever-present issue in interval optimization and interval analysis in general: \emph{the dependency problem}. Since we do not allow any dependencies among the coefficients of an interval program and assume that the values of the coefficients perturb independently, any transformation that changes the program and duplicates some of the interval coefficients will also create new possible scenarios. Due to the dependency problem, transforming an interval program into a desired form by applying the standard transformations used in classical linear programming may change its feasible or optimal solutions and other important properties. Therefore, it may be necessary to study different types of programs separately, since the results obtained for one type do not have to hold for other types.
	
	We examine the effects of the transformations on the optimal solution set and the optimal value range in order to identify the transformations that are also applicable to interval linear programs while preserving the considered properties. This will allow us to directly generalize the results known for one type of programs to other forms, as well as highlight the cases in which the particular properties may differ. We will also study the problem on a special class of interval linear programs with interval coefficients only appearing in the objective function and the right-hand-side vector in order to derive stronger results. These types of programs arise in applications, where the coefficient matrix represents some known relations, such as transportation problems \cite{Safi:2013, Cerulli:2017:inttransp} or network flow problems \cite{Hashemi:2006}.
	
	The paper is structured as follows: Section~\ref{sec:ilp} introduces the basic notions and terminology of interval linear programming. In Section~\ref{sec:transf}, we review the standard transformations used in classical linear programming and discuss their applicability on interval programs. Sections~\ref{sec:optsol} and~\ref{sec:optval} explore the effects of the transformations on the set of all optimal solutions and optimal values of an interval linear program. Section~\ref{sec:concl} summarizes and concludes the paper. 
	
	\section{Interval Linear Programming}\label{sec:ilp}
	Given two real matrices $\lb{A}, \ub{A} \in \mathbb{R}^{m\times n}$ satisfying $\lb{A} \le \ub{A}$, we define an \emph{interval matrix} as the set \[\bint{A} = [\lb{A}, \ub{A}] = \{A \in \mathbb{R}^{m \times n} : \lb{A} \le A \le \ub{A}\}.\]
	The matrices $\lb{A}, \ub{A}$ are the \emph{lower} and \emph{upper bound} of $\bint{A}$, respectively. Alternatively, we can also define an interval matrix by its \emph{center} $A_c = \frac{1}{2}(\ub{A} + \lb{A})$ and \emph{radius} $A_\Delta = \frac{1}{2}(\ub{A} - \lb{A})$.
	Throughout the paper, we denote interval objects by bold lowercase (for one-dimensional intervals and interval vectors) and bold uppercase (for interval matrices) letters. The symbol $\mathbb{IR}$ is used to denote the set of all (real) intervals.
	
	Furthermore, let us extend the classical definition of linear programs to the case of uncertain interval coefficients. We define an \emph{interval linear program} (ILP) as a family of linear programs
	\begin{equation}\label{eq:ilp:def}
	\{ \text{minimize } c^T x 
	\text{ subject to} \  x \in \mathcal{M}(A,b) : A \in \bint{A}, b \in \bint{b}, c \in \bint{c} \}
	\end{equation}
	where $\mathcal{M}(A,b)$ denotes the feasible set described by some linear constraints. For the sake of brevity, we usually write ILPs in the form of linear programs with interval coefficients (see \eqref{eq:ilp:A}--\eqref{eq:ilp:C} below for examples). A particular linear program in the family is called a~\emph{scenario}. Given an interval linear program, we can define its \emph{dual} ILP as the family of all dual linear programs.
	
	Similarly, we can also generalize the notion of feasibility and optimality to the interval case. We say that a vector $x \in \mathbb{R}^n$ is a \emph{(weakly) feasible/optimal} solution to the interval linear program, if there exists a scenario determined by the triplet $A \in \bint{A}$, $b \in \bint{b}, c \in \bint{c}$ such that $x$ is a feasible/optimal solution for the scenario (see \cite{Rohn:InexactLP:2006} and \cite{Hladik:2017} for an overview of the feasibility properties of interval linear systems). 
	The \emph{(weakly) feasible/optimal set} is the set of all weakly feasible/optimal solutions of an ILP.
	We often omit the word ``weakly'', since no confusion should arise in the paper.
	
	Considering the optimal objective values, we may be interested in the best and the worst value that can be achieved as optimal for some scenario of the given ILP. For a minimization program, these bounds are defined as 
	\begin{align*}
	&\lb{f}(\bint{A}, \bint{b}, \bint{c}) = \inf \{f(A,b,c): A \in \bint{A}, b \in \bint{b}, c \in \bint{c}\},\\
	&\ub{f}(\bint{A}, \bint{b}, \bint{c}) = \sup \{f(A,b,c): A \in \bint{A}, b \in \bint{b}, c \in \bint{c}\},
	\end{align*}
	where $f(A,b,c)$ denotes the optimal value of the corresponding linear program, allowing the values $-\infty$ and $\infty$ for unbounded programs or infeasible programs.
	The interval $[\lb{f}(\bint{A}, \bint{b}, \bint{c}), \ub{f}(\bint{A}, \bint{b}, \bint{c})]$ is called the \emph{optimal value range}.
	
	Due to the dependency problem, we usually assume that an~ILP is given either in the general form
	\begin{equation*}
	\begin{array}{lrrrr@{\ }l}
	\text{minimize } & \multicolumn{5}{l}{\bint{c_1}^T x_1 + \bint{c_2}^T x_2} \\
	\text{subject to } & \bint{A}x_1 &+ & \bint{B}x_2 & = & \bint{b_1},\\
	&  \bint{C}x_1 & + & \bint{D}x_2 & \le & \bint{b_2},\\
	& &  \multicolumn{2}{r}{x_1, x_2} &\ge & 0,
	\end{array}
	\end{equation*}
	or in one of the following forms often encountered in optimization problems:
	\begin{align}
	&\text{minimize } \bint{c}^T x \text{ subject to } \bint{A}x = \bint{b},\;x \ge 0, \tag{I}\label{eq:ilp:A}\\ 
	&\text{minimize } \bint{c}^T x \text{ subject to } \bint{A}x \le \bint{b}, \tag{II}\label{eq:ilp:B}\\
	&\text{minimize } \bint{c}^T x \text{ subject to } \bint{A}x \le \bint{b},\;x \ge 0. \tag{III}\label{eq:ilp:C}
	\end{align}
	
	\section{Transformations of Interval Linear Programs}\label{sec:transf}
	Let us now briefly review the basic transformations that are used in linear programming to convert a given program into the desired form. While some of them can also be applied to interval programs, other transformations may change the set of feasible or optimal solutions and affect the properties of the program. The effects of these transformations on the set of all optimal solutions and optimal values will be discussed in more detail in Sections~\ref{sec:optsol} and~\ref{sec:optval}.
	
	\paragraph{Changing the objective.} A commonly used trick in linear programming is to switch the objective from maximization to minimization (or vice versa) by taking the opposite of the objective function. Since this transformation does not duplicate any coefficients, it can also be directly applied to interval programs: an interval objective $\max \bint{c}^T x$ can be equivalently rewritten as $- (\min -\bint{c}^T x)$.
	
	\paragraph{Adding slack variables.} In order to convert inequality constraints to equations in a linear program, sign-restricted slack variables are added to the constraints. Analogously, we can transform an interval inequality system $\bint{A}x \le \bint{b}$ into an interval system \[ \bint{A}x + Iy = \bint{b},\; y \ge 0. \]
	Again, the transformation does not duplicate any of the existing coefficients and only introduces some new crisp coefficients. Applying this transformation to a set of constraints results in an equivalent interval linear program, since each new scenario is equivalent to a corresponding scenario in the original interval system.
	
	\paragraph{Splitting equations into inequalities.} When changing a general linear program into an inequality-constrained form, we split each equation into two opposite inequalities. However, splitting an interval equation into interval inequalities while preserving all properties is not always possible, due to the dependency problem. Breaking dependencies among multiple occurrences of an interval coefficient leads to generating new scenarios that do not have a counterpart in the original problem. More precisely, while an interval equation $\bint{a}^T x = b$ consists of the scenarios $a^T x = b$ with all possible choices of $a \in \bint{a}$, the system of two interval inequalities $\bint{a}^T x \le b$, $\bint{a}^T x \ge b$ corresponds to the (possibly larger) set of all scenarios in the form $a_1^T x \le b, a_2^T x \ge b$ with coefficients $a_1, a_2 \in \bint{a}$.
	
	\paragraph{Imposing non-negativity.} Another transformation used in linear programming is substituting the difference of two non-negative variables for a free variable. Along with introducing two new variables for each original free variables, the operation also duplicates the corresponding coefficients. This, again, leads to breaking the dependency in case of interval constraints: By replacing a constraint $\bint{a}^T x = \bint{b}$ with $\bint{a}^T x^+ - \bint{a}^T x^- = \bint{b}$ and $x^+, x^- \ge 0$, we create two independent occurrences of the interval coefficient $\bint{a}$, which can take on different values in a particular scenario.
	
	\section{Optimal Solution Set under Transformations}\label{sec:optsol}
	\subsection{General Case}\label{ssec:gen}
	In this section, we discuss the two types of transformations that can possibly change the properties of an interval linear program: splitting equation constraints and imposing non-negativity on the variables. First, we present examples of ILPs showing that these transformations can change the optimal solution set. Note that in both cases, the transformed interval program contains all of the original scenarios, therefore, the original optimal set is always a subset of the transformed one.
	
	We have seen in Section~\ref{sec:transf} that splitting an interval equation into two inequalities can cause a dependency problem and possibly change the feasible or optimal solutions of an ILP. However, it was proved by \cite{Li:2015} that this is not the case for the feasible set.
	
	\begin{theorem}[\cite{Li:2015}]\label{thm:Li}
		A vector $x \in \mathbb{R}^n$ is a weakly feasible solution of an~interval linear system $\bint{A}x = \bint{b}$ if and only if it is a weakly feasible solution of the system $\bint{A}x \le \bint{b}, \bint{A}x \ge \bint{b}$.
	\end{theorem}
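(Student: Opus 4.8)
\emph{Proof proposal.} The plan is to prove the two implications separately; only the ``if'' direction carries any content. If $x$ is weakly feasible for $\bint{A}x = \bint{b}$, then $Ax = b$ for some $A \in \bint{A}$ and $b \in \bint{b}$, and this single pair $(A,b)$ simultaneously certifies $Ax \le b$ and $Ax \ge b$; hence $x$ is weakly feasible for the split system. This is just the scenario containment already noted above.

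For the converse, suppose $x$ is weakly feasible for $\bint{A}x \le \bint{b}$, $\bint{A}x \ge \bint{b}$. Since the two occurrences of the interval data are independent, this means there exist $A_1, A_2 \in \bint{A}$ and $b_1, b_2 \in \bint{b}$ with $A_1 x \le b_1$ and $A_2 x \ge b_2$. I would assemble a single scenario of the equality system one constraint at a time. Fix a row index $i$ and consider the segment $A(t) = (1-t)A_1 + t A_2$, $b(t) = (1-t)b_1 + t b_2$ for $t \in [0,1]$; because every interval entry of $\bint{A}$ and $\bint{b}$ is convex, $A(t) \in \bint{A}$ and $b(t) \in \bint{b}$ for all such $t$. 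The scalar map $\varphi_i(t) = \big(A(t)x - b(t)\big)_i$ is affine in $t$, hence continuous, with $\varphi_i(0) = (A_1 x - b_1)_i \le 0$ and $\varphi_i(1) = (A_2 x - b_2)_i \ge 0$, so by the intermediate value theorem there is $t_i \in [0,1]$ with $\varphi_i(t_i) = 0$. Letting the $i$-th row of $A$ be that of $A(t_i)$ and the $i$-th entry of $b$ be $b(t_i)_i$, and doing this for every $i$, produces $A \in \bint{A}$ and $b \in \bint{b}$ with $Ax = b$, so $x$ is weakly feasible for $\bint{A}x = \bint{b}$.

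The subtlety to watch is that the mixing parameter must be chosen per row rather than once and for all: a single $t$ need not annihilate all rows simultaneously, and it is precisely the row-wise (independent) interval structure that lets $A_1,b_1$ and $A_2,b_2$ be combined in different proportions in different constraints. As a cross-check, the claim can also be re-derived via an Oettli--Prager-type characterization: weak feasibility of $\bint{A}x=\bint{b}$ at $x$ is equivalent to $|A_c x - b_c| \le A_\Delta|x| + b_\Delta$, which splits into $A_c x - A_\Delta|x| \le \ub{b}$ and $A_c x + A_\Delta|x| \ge \lb{b}$; weak feasibility of $\bint{A}x \le \bint{b}$ at $x$ is equivalent to the first of these and of $\bint{A}x \ge \bint{b}$ to the second, and for the independent pair both must hold --- exactly reproducing the equality condition. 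I would present the elementary construction as the main argument and mention this computation only to explain why duplicating the data does no harm in this particular case.
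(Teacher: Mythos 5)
Your proof is correct. Note that the paper itself offers no proof of this statement --- it is quoted as a known result of Li (2015) --- so there is nothing internal to compare against; judged on its own, your argument is sound and self-contained. The forward direction is indeed just scenario containment. In the converse, the two essential points are both handled properly: (i) a scenario of the split system genuinely allows \emph{independent} choices $A_1,b_1$ and $A_2,b_2$, which you acknowledge rather than gloss over, and (ii) because $\bint{A}$ and $\bint{b}$ are entry-wise boxes, assembling a matrix from the $i$-th row of $A(t_i)$ for different values $t_i$ still produces an element of $\bint{A}$; this is exactly what licenses the per-row intermediate-value argument, and a single global $t$ would not suffice. Your Oettli--Prager cross-check is also accurate: the equality characterization $\lvert A_c x - b_c\rvert \le A_\Delta\lvert x\rvert + b_\Delta$ splits into the Gerlach-type conditions $A_c x - A_\Delta\lvert x\rvert \le \ub{b}$ and $A_c x + A_\Delta\lvert x\rvert \ge \lb{b}$, which characterize weak feasibility of the two inequality blocks separately; this is essentially how the result is usually derived in the literature, so your elementary row-wise construction is the more instructive of the two routes, while the characterization-based one generalizes more readily to other feasibility questions.
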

	
	Even though the transformation preserves the weakly feasible solution set, the following example shows that creating new scenarios may, in fact, lead to expanding the set of optimal solutions.
	
	\begin{example}\label{ex:optsol}
		Consider the interval linear program
		\begin{equation}\label{eq:ex}
		\begin{array}{lr@{\ }l}
		\text{minimize } & \multicolumn{2}{l}{-x_1} \\
		\text{subject to } & [0,1]x_1 - x_2 = 0,\\
		&  x_2 \le 1,\\
		&  x_1, x_2 \ge 0.
		\end{array}
		\end{equation}
		The optimal solution set of~\eqref{eq:ex} consists of all values with $x_1 \in [1, \infty)$ and $x_2 = 1$. By splitting the equation $[0,1]x_1 - x_2 = 0$ into two inequalities, we allow the value for each coefficient to be chosen independently. This results in introducing additional scenarios into the problem, such as the scenario 
		\begin{equation*}
		1x_1 - x_2 \le 0,\quad 0x_1 - x_2 \ge 0
		\end{equation*}
		with the optimal solution $(0,0)$. Therefore, the optimal solution sets of the two problems are not equivalent.
	\end{example}
	
	The second type of transformation affected by the dependency problem is restricting the sign of the variables. However, unlike all of the other transformations, replacing a free variable by a difference of two non-negative variables does not even preserve the feasible solution set of an interval linear program, as already noted by \cite{Hladik:2012}. This implies that the set of optimal solutions may also change, in general.
	
	\subsection{Special Case: Fixed Coefficient Matrix}\label{ssec:spec}
	This section is devoted to a special class of interval linear programs, in which uncertainty only affects the objective function and the right-hand-side vector. We show that, unlike general interval linear programs, programs with a fixed coefficient matrix allow for all types of transformations while preserving the optimal solution set. Theorem~\ref{thm:spec:AC} presents the result for the transformation of splitting an equation into two opposite inequalities, i.e. converting a program of type~\eqref{eq:ilp:A} with a fixed matrix into a program of type~\eqref{eq:ilp:C}.
	
	\begin{theorem}\label{thm:spec:AC}
		Let $\mathcal{S}(A,\bint{b},\bint{c})$ be the optimal solution set of an interval linear program of type~\eqref{eq:ilp:A} with a fixed coefficient matrix given by the triplet $(A, \bint{b}, \bint{c})$. Then, $\mathcal{S}(A,\bint{b},\bint{c})$ is equal to the optimal solution set of the problem
		\begin{equation}\label{eq:transfAC}
		\begin{array}{l@{\hskip 8pt}r@{\ }l}
		\textnormal{minimize} & \multicolumn{2}{l}{\bint{c}^T x}\\
		\textnormal{subject to} & A x &\le \bint{b}_1,\\
		& -A x &\le -\bint{b}_2,\\
		& x &\ge 0.
		\end{array}
		\end{equation}
		with $\bint{b}_1 = \bint{b}_2 = \bint{b}$.
	\end{theorem}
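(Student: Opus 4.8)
The plan is to establish the two set inclusions separately; write $\mathcal{T}(A,\bint{b},\bint{c})$ for the optimal solution set of \eqref{eq:transfAC}. Recall that, by definition, each of these sets is the union over all admissible scenarios of the optimal solution sets of the individual linear programs. A scenario of \eqref{eq:transfAC} is parametrized by a pair $b_1, b_2 \in \bint{b}$ chosen \emph{independently}, together with $c \in \bint{c}$, and has feasible set $\{x \ge 0 : b_2 \le Ax \le b_1\}$. The inclusion $\mathcal{S}(A,\bint{b},\bint{c}) \subseteq \mathcal{T}(A,\bint{b},\bint{c})$ is immediate: a scenario of \eqref{eq:ilp:A} with right-hand side $b \in \bint{b}$ and objective $c \in \bint{c}$ is reproduced verbatim by the scenario of \eqref{eq:transfAC} with $b_1 = b_2 = b$ and the same $c$, since $\{x \ge 0 : b \le Ax \le b\} = \{x \ge 0 : Ax = b\}$; hence every optimal solution of the former is optimal for the latter.

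The reverse inclusion is the substantive one. First I would take any $x^*$ that is optimal for a scenario of \eqref{eq:transfAC} given by $b_1, b_2 \in \bint{b}$ and $c \in \bint{c}$, so that $x^* \ge 0$, $b_2 \le Ax^* \le b_1$, and $c^T x^* \le c^T x$ for every $x \ge 0$ with $b_2 \le Ax \le b_1$. I then set $b^* := Ax^*$ and observe that $b^* \in \bint{b}$, since $\lb{b} \le b_2 \le Ax^* \le b_1 \le \ub{b}$. Thus $(A, b^*, c)$ is a legitimate scenario of \eqref{eq:ilp:A}, and I claim $x^*$ is optimal for it. It is feasible there ($Ax^* = b^*$, $x^* \ge 0$), and its feasible set $\{x \ge 0 : Ax = b^*\}$ is contained in the feasible set $\{x \ge 0 : b_2 \le Ax \le b_1\}$ of the transformed scenario, because any $x$ with $Ax = b^* = Ax^*$ automatically satisfies $b_2 \le Ax \le b_1$. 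Hence the value $c^T x^*$, which $x^*$ attains as the minimum over the larger set, is a fortiori the minimum over the smaller one, so $x^* \in \mathcal{S}(A,\bint{b},\bint{c})$.

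The proof is short, and the only routine bookkeeping concerns degenerate scenarios: those that are infeasible or unbounded contribute nothing to either optimal set and may simply be discarded, while a feasible scenario with finite optimal value attains it, so the vector $x^*$ above exists whenever the scenario matters at all. The conceptual heart of the argument — and the step I would flag as the one to get right — is the reconstruction $b^* = Ax^*$, which recombines the two independently chosen right-hand sides of the split system back into a single one. This is precisely where fixity of $A$ is used: both inequality blocks carry the \emph{same} matrix $A$, so $Ax$ is one well-defined quantity; for an interval matrix the two blocks would carry different matrices and no such recombination is available, which is exactly the mechanism behind the enlargement of the optimal set in Example~\ref{ex:optsol}.
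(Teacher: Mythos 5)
Your proof is correct and follows essentially the same route as the paper: the forward inclusion via scenario containment, and for the reverse inclusion the reconstruction of a single right-hand side $b^* = Ax^*$ (the paper's $b_3$) lying in $[b_2,b_1]\subseteq\bint{b}$, followed by the observation that the type~\eqref{eq:ilp:A} feasible set is contained in the transformed scenario's feasible set, so optimality transfers. Your remark on where fixity of $A$ enters is a nice addition but not a departure from the paper's argument.
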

	\begin{proof}
		Let $\mathcal{S}(A',\bint{b'},\bint{c})$ denote the optimal solution set of program~\eqref{eq:transfAC}. Clearly, the inclusion $\mathcal{S}(A,\bint{b},\bint{c}) \subseteq \mathcal{S}(A',\bint{b'},\bint{c})$ holds, since \eqref{eq:transfAC} contains all the scenarios of \eqref{eq:ilp:A}.
		
		On the other hand, let $x'$ be an optimal solution of ILP \eqref{eq:transfAC} for a scenario determined by the coefficient vectors $c \in \bint{c}$ and $b_1, b_2 \in \bint{b}$ satisfying 
		\begin{equation}\label{eq:transf:scenario}
		Ax' \le b_1,\; -Ax' \le -b_2.
		\end{equation}
		Since we have, $b_2 \le Ax' \le b_1$, there exists $b_3 \in [b_2, b_1] \subseteq \bint{b}$ with $Ax' = b_3$. We claim that $x'$ is also an optimal solution of program~\eqref{eq:ilp:A} for the scenario
		\begin{equation*}
		\begin{array}{lrl@{\ }l}
		\text{minimize} & \multicolumn{2}{l}{c^T x} \\
		\text{subject to} & Ax &=b_3,\\
		& x & \ge 0.
		\end{array}
		\end{equation*}
		Suppose, for the sake of contradiction, that there exists another feasible solution $x^*$ with $c^T x^* < c^T x'$. By the choice of $b_3$, the vector $x^*$ is also feasible for scenario~\eqref{eq:transf:scenario}. However, since the objective function is the same for both problems, this contradicts the assumption that $x'$ is optimal in scenario~\eqref{eq:transf:scenario}.
	\end{proof}
	
	Note that while the transformation preserves the optimal solutions, it may still change other properties of the program, such as the existence of infeasible scenarios or the range of optimal values, which will be discussed in more detail in Section~\ref{sec:optval}. Theorem~\ref{thm:spec:nonneg} shows an analogous result for the transformation of free variables into non-negative variables.
	
	\begin{theorem}\label{thm:spec:nonneg}
		Let $\mathcal{S}(A, \bint{b}, \bint{c})$ denote the optimal solution set of an interval linear program of type~\eqref{eq:ilp:B} with a fixed coefficient matrix and let $\mathcal{S}(A', \bint{b}, \bint{c}')$ be the optimal solution set of the program
		\begin{equation}\label{eq:transfBC}
		\begin{array}{lr@{\ }l}
		\textnormal{minimize} & \multicolumn{2}{l}{\bint{c}_1^T x^+ - \bint{c}_2^T x^-}\\
		\textnormal{subject to} & A x^+ - A x^- &\le \bint{b},\\
		& x^+, x^- &\ge 0
		\end{array}
		\end{equation}
		with $\bint{c}_1 = \bint{c}_2 = \bint{c}$. Then, the following properties hold:
		\begin{enumerate}
			\item If $x \in \mathcal{S}(A,\bint{b},\bint{c})$, then there exists $(x^+, x^-) \in \mathcal{S}(A', \bint{b}, \bint{c}')$ with $x = x^+ - x^-$.
			\item Conversely, if $(x^+, x^-) \in \mathcal{S}(A', \bint{b}, \bint{c}')$, then $x^+ - x^- \in \mathcal{S}(A,\bint{b},\bint{c})$.
		\end{enumerate}
	\end{theorem}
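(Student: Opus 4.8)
\emph{Proof idea.}\quad The two directions behave very differently, so I would prove them by separate arguments.

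Part~1 is the routine direction, handled by the positive/negative-part decomposition. Given $x\in\mathcal S(A,\bint b,\bint c)$, fix a scenario $(A,b,c)$ with $b\in\bint b$ and $c\in\bint c$ in which $x$ is optimal, and put $x^+_j=\max\{x_j,0\}$ and $x^-_j=\max\{-x_j,0\}$, so that $x^+,x^-\ge 0$ and $x=x^+-x^-$. Take the scenario of~\eqref{eq:transfBC} in which $c_1=c_2=c$ and the right-hand side is the same $b$. Then $(x^+,x^-)$ is feasible for it because $Ax^+-Ax^-=Ax\le b$, and its objective value equals $c^Tx^+-c^Tx^-=c^Tx$. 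For any other feasible pair $(y^+,y^-)$ the vector $y=y^+-y^-$ satisfies $Ay\le b$, so $c^Ty\ge c^Tx$ by optimality of $x$; since $c_1=c_2=c$, the objective at $(y^+,y^-)$ is exactly $c^Ty$. Hence $(x^+,x^-)$ attains the minimum of this scenario, i.e. $(x^+,x^-)\in\mathcal S(A',\bint b,\bint c')$ with $x=x^+-x^-$.

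Part~2 is the real content, since within a scenario of~\eqref{eq:transfBC} the two copies $c_1,c_2$ of $\bint c$ may take different values, so the transformed objective $c_1^Tx^+-c_2^Tx^-$ need not equal $c^T(x^+-x^-)$ for any single $c\in\bint c$. My plan is to reconstruct a suitable $c$ from linear programming duality. Fix a scenario of~\eqref{eq:transfBC} with optimal solution $(x^+,x^-)$, write it as $\min\{c_1^Tx^+-c_2^Tx^-:\ Ax^+-Ax^-\le b,\ x^+,x^-\ge 0\}$, and let $y^*\ge 0$ be a dual-optimal vector; dual feasibility then reads $-c_1\le A^Ty^*\le -c_2$ and strong duality gives $c_1^Tx^+-c_2^Tx^-=-b^Ty^*$. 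Set $c:=-A^Ty^*$. The inequalities $-c_1\le A^Ty^*\le -c_2$ rearrange componentwise to $c_2\le c\le c_1$, and since $c_1,c_2\in\bint c=[\lb c,\ub c]$ this forces $c\in\bint c$.

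It remains to show that $x:=x^+-x^-$ is optimal for the original scenario $(A,b,c)$, which is feasible because $Ax=Ax^+-Ax^-\le b$. I would compare with the dual of that scenario, namely $\max\{-b^Ty:\ A^Ty=-c,\ y\ge 0\}$ (an equality constraint, since $x$ is free); by the choice $c=-A^Ty^*$ the vector $y^*$ is feasible for it with value $-b^Ty^*$. The step I expect to be the crux is verifying $c^Tx=-b^Ty^*$, and this is where complementary slackness for the transformed scenario enters: for each index $j$, either $x^+_j=0$ or the dual constraint tied to $x^+_j$ is active, i.e. $(-A^Ty^*)_j=c_{1j}$, so in either case $(c-c_1)_jx^+_j=0$; symmetrically $(c-c_2)_jx^-_j=0$. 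Summing over $j$ yields $(c-c_1)^Tx^+=(c-c_2)^Tx^-=0$, hence $c^Tx=c^Tx^+-c^Tx^-=c_1^Tx^+-c_2^Tx^-=-b^Ty^*$. Weak duality for the original scenario then squeezes $x$ to optimality, so $x^+-x^-\in\mathcal S(A,\bint b,\bint c)$. I would also note in passing that the degenerate cases (an infeasible or unbounded transformed scenario) cannot occur here, precisely because we assume $(x^+,x^-)$ is an optimal solution, which is what guarantees the dual optimum $y^*$ used throughout.
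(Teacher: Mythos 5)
Your proposal is correct and follows essentially the same route as the paper: part 1 via the positive/negative-part decomposition, and part 2 by taking a dual optimal vector for the transformed scenario, defining the recovered objective $c$ from the dual constraints so that $c_2\le c\le c_1$ forces $c\in\bint{c}$, and using complementary slackness to verify $c^Tx=c_1^Tx^+-c_2^Tx^-$. The only differences are cosmetic (opposite sign convention for the dual variable and a slightly more compact complementary-slackness bookkeeping than the paper's case analysis).
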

	\begin{proof}
		Let $x \in \mathcal{S}(A,\bint{b},\bint{c})$ be an optimal solution for a scenario $c \in \bint{c}$, $b \in \bint{b}$. Decompose $x$ into the non-negative vectors $x^+ = \max(0,x)$ and $x^- = -\min(0,x)$, where the operations $\max$ and $\min$ are understood entry-wise. The vectors satisfy $x = x^+ - x^-$ and it is easy to see that the pair $(x^+, x^-)$ is optimal in ILP~\eqref{eq:transfBC} for the scenario determined by the objective vector $(c, -c)$ and the right-hand side $b$.
		
		Now, let $(x^+, x^-) \in \mathcal{S}(A', \bint{b}, \bint{c}')$ be optimal for some $b \in \bint{b}$ and $c_1, c_2 \in \bint{c}$. By duality in linear programming, there exists a dual feasible vector $y$ satisfying
		\begin{eqnarray*}
			&&c_1^T x^+ - c_2^T x^- = b^T y,\\
			&&A x^+ - Ax^- \le b,\; x^+ \ge 0,\; x^- \ge 0,\\
			&&A^T y \le c_1,\; -A^T y \le -c_2,\; y \le 0.
		\end{eqnarray*}
		From the dual feasibility constraints, we have $A^T y = c_3$ for some $c_3 \in [c_2, c_1] \subseteq \bint{c}$. We will prove that $x = x^+ - x^-$ is optimal for the scenario with objective vector $c_3$ and right-hand-side vector $b$ in ILP~\eqref{eq:ilp:B}, by showing that it satisfies the system
		\begin{equation*}
		c_3^T x = b^T y,\; Ax \le b,\; A^T y = c_3,\; y \le 0.
		\end{equation*}
		Namely, it remains to show that $c_3^T (x^+ - x^-) = b^T y$ holds. Complementary slackness implies that for each $i \in \{1, \dots, n\}$ we have
		\begin{eqnarray*}
			&&(x^+)_i = 0\;\vee\;(c_1-A^T y)_i = 0, \text{ and}\\
			&&(x^-)_i = 0\;\vee\;(A^T y-c_2)_i = 0.
		\end{eqnarray*}
		If $(x^+)_i > 0$ and $(x^-)_i > 0$ for some index $i$, then we have $(c_1)_i = (c_2)_i = (c_3)_i$ by the choice of $c_3$. For $(x^+)_i = 0$ and $(x^-)_i > 0$ we have $(c_2)_i = (c_3)_i$, and analogically, $(c_1)_i = (c_3)_i$ for the symmetric case. By substituting into the equation
		\begin{equation*}
		c_1^T x^+ - c_2^T x^- = b^T y,
		\end{equation*}
		we can see that the desired constraint is satisfied in all cases. Therefore, $x^+ - x^-$ is an optimal solution of program~\eqref{eq:ilp:B}.
	\end{proof}
	
	Figure~\ref{fig:gen} provides an overview of the transformations preserving the feasible and optimal solution set of general interval linear programs, as presented in Section~\ref{ssec:gen}. Note that ILPs of type~\eqref{eq:ilp:C} are a special case of type~\eqref{eq:ilp:B} and the transformation of type~\eqref{eq:ilp:A} to type~\eqref{eq:ilp:B} can be obtained by transitivity. As shown in Figure~\ref{fig:spec}, all of the considered transformations preserve the optimal solution set for ILPs with a fixed coefficient matrix. Apart from the direct consequences of Theorem~\ref{thm:spec:AC} and Theorem~\ref{thm:spec:nonneg}, the remaining results follow again by transitivity.
	
	\begin{figure}[b]
		\begin{minipage}[t]{0.49\textwidth}
			\begin{tikzpicture}[scale=0.5]
			\tikzstyle{sipka}=[-{>[length=7pt, width=5pt]}]
			\tikzstyle{rect}=[draw,rectangle,minimum height=12ex,inner sep=3pt,text width=50pt, text height=2.25ex, align=center]
			\node[rect] (a) at (0,4) {Type (I): \small{$\bint{A} x=\bint{b}$, $x\ge 0$}};
			\node[rect] (b) at (-3.8,-1.2) {Type (II): \small{$\bint{A}x \le \bint{b}$}};
			\node[rect] (c) at (3.8,-1.2) {Type (III): \small{$\bint{A}x \le \bint{b}$, $x \ge 0$}};
			
			\draw[sipka,dashed] (a.south west) -- (b.north);
			%\draw[sipka] ([xshift=15pt]b.north) -- node[below,sloped] {transitivity} ([xshift=15pt]a.south west);
			\draw[sipka,dashed] (a.south east) -- (c.north);
			\draw[sipka] ([xshift=-15pt]c.north) -- ([xshift=-15pt]a.south east);
			%\draw[sipka] ([yshift=5pt]b.east) -- node[above] {\autoref{thm:spec:nonneg}} ([yshift=5pt]c.west);
			\draw[sipka] ([yshift=-5pt]c.west) -- ([yshift=-5pt]b.east);
			\node at (0,1) {$\min\,\bint{c}^T x$};
			\end{tikzpicture}
			\caption{Transformations preserving the feasible (dashed arrows) and optimal (solid arrows) solution set of a general interval linear program.}\label{fig:gen}
		\end{minipage}
		\hspace{0.02\textwidth}
		\begin{minipage}[t]{0.49\textwidth}
			\begin{tikzpicture}[scale=0.5]
			\tikzstyle{sipka}=[-{>[length=7pt, width=5pt]}]
			\tikzstyle{rect}=[draw,rectangle,minimum height=12ex,inner sep=3pt,text width=50pt, text height=2.25ex, align=center]
			\node[rect] (a) at (0,4) {Type (I): \small{$A x=\bint{b}$, $x\ge 0$}};
			\node[rect] (b) at (-3.8,-1.2) {Type (II): \small{$Ax \le \bint{b}$}};
			\node[rect] (c) at (3.8,-1.2) {Type (III): \small{$Ax \le \bint{b}$, $x \ge 0$}};
			
			\draw[sipka] (a.south west) -- (b.north);
			\draw[sipka] ([xshift=15pt]b.north) -- ([xshift=15pt]a.south west);
			\draw[sipka] (a.south east) -- node[above,sloped] {Thm.~\ref{thm:spec:AC}} (c.north);
			\draw[sipka] ([xshift=-15pt]c.north) --  ([xshift=-15pt]a.south east);
			\draw[sipka] ([yshift=5pt]b.east) -- node[above] {Thm.~\ref{thm:spec:nonneg}} ([yshift=5pt]c.west);
			\draw[sipka] ([yshift=-5pt]c.west) -- ([yshift=-5pt]b.east);
			\node at (0,1) {$\min\, \bint{c}^T x$};
			\end{tikzpicture}
			\caption{Transformations preserving the optimal solution set of an interval linear program with a fixed coefficient matrix.}\label{fig:spec}
		\end{minipage}
	\end{figure}
	
	\section{Optimal Values under Transformations}\label{sec:optval}
	Let us now discuss the effects of transformations on the set of all optimal values of an interval linear program and the optimal value range. Recall that the optimal value range refers to the interval $[\lb{f}, \ub{f}]$, which is the smallest interval that encloses all optimal values. This interval may differ from the set of optimal values, since not all values in the range have to be attained as optimal for a scenario. Again, we consider the two transformations that can possibly lead to a dependency problem: splitting equations into inequalities and substituting a difference of two non-negative variables for a free variable.
	
	\subsection{General Case}\label{ssec:gen:val}
	We have already seen in Section~\ref{ssec:gen} that splitting an equation into two opposite inequalities may change the optimal solution set of an interval linear program. This also holds for the optimal value range, namely, the transformation can change the worst-case bound ($\ub{f}$ for a minimization program). Moreover, this is caused not only by creating an infeasible scenario resulting in $\ub{f} = \infty$, but can also happen due to an expansion of the set of finite optimal values.
	\addtocounter{example}{-1}
	\begin{example}[continued]
		The optimal value range of the interval linear program
		\begin{equation*}
		\begin{array}{lr@{\ }l}
		\text{minimize } & \multicolumn{2}{l}{-x_1} \\
		\text{subject to } & [0,1]x_1 - x_2 = 0,\\
		&  x_2 \le 1,\\
		&  x_1, x_2 \ge 0.
		\end{array}
		\end{equation*}
		is the interval $(-\infty, -1]$. However, by splitting the equation $[0,1]x_1 - x_2 = 0$ into two inequalities, the solution $(0,0)$ becomes optimal and, thus, the value~$0$ belongs to the set of optimal values (and the optimal value range) of the transformed program. This shows that $\ub{f} = -1$ is no longer the worst-case optimal value.
	\end{example}
	
	On the other hand, we will now show that the transformation preserves the best-case optimal value~$\lb{f}$. The proof is a consequence of a result on a unified approach for computing the optimal value range by~\cite{Hladik:2009}. For the purposes of the following theorem, let us consider an interval linear program in the general form
	\begin{equation*}
	\begin{array}{lrrrr@{\ }l}
	\text{minimize } & \multicolumn{5}{l}{\bint{c_1}^T x_1 + \bint{c_2}^T x_2} \\
	\text{subject to } & \bint{A}x_1 &+ & \bint{B}x_2 & = & \bint{b_1},\\
	&  \bint{C}x_1 & + & \bint{D}x_2 & \le & \bint{b_2},\\
	& &  \multicolumn{2}{r}{x_1, x_2} &\ge & 0.
	\end{array}
	\end{equation*}
	Let $\bint{b} = (\bint{b_1}, \bint{b_2})$, $\bint{c} = (\bint{c_1}, \bint{c_2})$ and let $\mathcal{M}$ denote the set of all feasible solutions. Furthermore, let $\mathcal{N}$ be the weakly feasible set of the dual ILP. Then, we can apply the formulas presented in Theorem~\ref{thm:hladik:f} to compute the optimal value range.
	
	\begin{theorem}[\cite{Hladik:2009}]\label{thm:hladik:f}
		We have
		\begin{equation}\label{eq:lb}
		\lb{f} = \inf \{c_c^T x - c_\Delta^T \lvert x \rvert : x \in \mathcal{M}\}.
		\end{equation}
		If $\ub{f} < \infty$, then
		\begin{equation}\label{eq:ub}
		\ub{f} = \sup \{b_c^T y + b_\Delta^T \lvert y \rvert : y \in \mathcal{N}\}.
		\end{equation}
	\end{theorem}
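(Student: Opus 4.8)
The plan is to prove both formulas by interchanging the order of the optimization: instead of first fixing a scenario and then solving the linear program, one first fixes a (primal, resp.\ dual) feasible point and then optimizes over the interval coefficients. The two elementary facts that drive the argument are the coordinatewise extremal identities $\min_{c \in \bint{c}} c^T x = c_c^T x - c_\Delta^T \lvert x \rvert$ for a fixed $x$, and $\max_{b \in \bint{b}} b^T y = b_c^T y + b_\Delta^T \lvert y \rvert$ for a fixed $y$; both follow by optimizing each coordinate separately ($\lb{c}_i$ or $\ub{c}_i$ according to the sign of $x_i$, and symmetrically for $b$).

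For~\eqref{eq:lb} I would write $\lb{f} = \inf_{A,B,C,D,b}\, \inf_{x}\, \inf_{c}\, c^T x$, where the innermost infimum is over $c \in \bint{c}$ (which may be pulled all the way inside since $c$ does not affect feasibility), the middle infimum is over the $x \ge 0$ feasible in the scenario fixed by the chosen constraint data, and the outer one is over all constraint data in their intervals. The inner infimum over $c$ equals $c_c^T x - c_\Delta^T \lvert x \rvert$ by the first identity. It then remains to observe that, as the constraint data range over their intervals, the set of feasible $x$ ranges exactly over the weakly feasible set $\mathcal{M}$ (each $x \in \mathcal{M}$ is feasible in at least one scenario, and no point outside $\mathcal{M}$ is feasible in any scenario); hence the two outer infima collapse into $\inf\{c_c^T x - c_\Delta^T\lvert x\rvert : x \in \mathcal{M}\}$, which is~\eqref{eq:lb}.

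For~\eqref{eq:ub} I would argue as follows. Since $\ub{f} < \infty$, no scenario can be primal infeasible, as an infeasible scenario contributes $+\infty$; thus every scenario has a finite optimal value or is unbounded ($-\infty$), and the unbounded ones are irrelevant to the supremum. For every remaining primal feasible and bounded scenario $(A,b,c)$, strong LP duality gives $f(A,b,c) = \max\{b^T y : y \text{ feasible in the dual scenario determined by } A,B,C,D,c\}$, while weak duality gives $f(A,b,c) \ge b^T y$ for any such dual feasible $y$. Expressing $\ub{f}$ as a supremum over scenarios and interchanging, I would first fix a dual feasible $y$ — the set of $y$ dual feasible in some scenario is precisely $\mathcal{N}$, and this condition depends only on $A,B,C,D,c$, not on $b$ — and then optimize over $b \in \bint{b}$ independently, obtaining $b_c^T y + b_\Delta^T \lvert y \rvert$ by the second identity. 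The inequality $\ub{f} \le \sup_{y\in\mathcal{N}}(b_c^T y + b_\Delta^T\lvert y\rvert)$ follows from the duality-based upper estimate on each scenario; for the reverse inequality, given $y \in \mathcal{N}$ one picks constraint data realizing its dual feasibility together with the worst-case $b \in \bint{b}$ for $b^T y$, and the resulting scenario is primal feasible with optimal value at least $b^T y$.

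The main obstacle is the bookkeeping in the $\ub{f}$ part: one must separate infeasible, unbounded, and finitely bounded scenarios, invoke the correct direction of LP duality at each step (weak duality for the easy inequality, strong duality to rewrite $f$ as a dual maximum for the other), and, crucially, exploit that dual feasibility of $y$ is independent of $b$, so that the right-hand side $b$ may be chosen adversarially after $y$ and the remaining data have been fixed. The $\lb{f}$ part is comparatively routine once the interchange of infima and the coordinatewise identity are in place.
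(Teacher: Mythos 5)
The paper does not prove this statement itself --- it is quoted from the cited reference \cite{Hladik:2009} --- so there is no in-paper proof to compare against. Your argument (pull the infimum over $c$ inside to get the coordinatewise identity $c_c^Tx-c_\Delta^T\lvert x\rvert$, observe that the feasible sets of the scenarios union to $\mathcal{M}$; for $\ub{f}$, use $\ub{f}<\infty$ to exclude infeasible scenarios, pass to the dual by strong/weak duality, and exploit that dual feasibility is independent of $b$ so the worst-case $b$ can be chosen after $y$) is correct and is essentially the argument given in that reference.
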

	
	Theorem~\ref{thm:optval:AC} proves that the transformation of splitting an equation into two opposite inequalities does not change the best-case optimal value of an interval linear program. The result implies that it is possible to convert an equation-constrained ILP of type~\eqref{eq:ilp:A} into an inequality constrained ILP of type~\eqref{eq:ilp:C}, while preserving the best optimal value.
	
	\begin{theorem}\label{thm:optval:AC} 
		Transforming $\bint{A}x = \bint{b}$ into $\bint{A}x \le \bint{b}, \bint{A}x \ge \bint{b}$ does not change the best optimal value $\lb{f}$ of a minimization ILP.
	\end{theorem}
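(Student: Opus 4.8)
The plan is to invoke the characterization of $\lb{f}$ given in Theorem~\ref{thm:hladik:f}, equation~\eqref{eq:lb}, for both the original equation-constrained program and the transformed inequality-constrained program, and to show that the two resulting infima coincide. The key observation is that formula~\eqref{eq:lb} expresses $\lb{f}$ purely in terms of the objective data $c_c$, $c_\Delta$ and the primal weakly feasible set $\mathcal{M}$. Since the transformation of splitting $\bint{A}x = \bint{b}$ into $\bint{A}x \le \bint{b}$, $\bint{A}x \ge \bint{b}$ touches neither the objective function nor the objective intervals, it suffices to prove that the primal weakly feasible set is unchanged — and this is exactly the content of Theorem~\ref{thm:Li} (the result of Li et al.).

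Concretely, I would proceed as follows. First, write the original ILP $\bint{A}x = \bint{b}$, $x \ge 0$ in the general form covered by Theorem~\ref{thm:hladik:f}, so that $\lb{f} = \inf\{c_c^T x - c_\Delta^T \lvert x\rvert : x \in \mathcal{M}\}$, where $\mathcal{M}$ is its weakly feasible set. Next, write the transformed ILP $\bint{A}x \le \bint{b}$, $\bint{A}x \ge \bint{b}$, $x \ge 0$ (which is of type~\eqref{eq:ilp:C}) in the same general form; call its weakly feasible set $\mathcal{M}'$. Theorem~\ref{thm:hladik:f} applies to this program as well, yielding $\lb{f}' = \inf\{c_c^T x - c_\Delta^T \lvert x\rvert : x \in \mathcal{M}'\}$ with the \emph{same} objective center and radius, since splitting the equation does not duplicate or alter any coefficient of $\bint{c}$. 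By Theorem~\ref{thm:Li}, a vector $x$ is weakly feasible for $\bint{A}x = \bint{b}$ if and only if it is weakly feasible for $\bint{A}x \le \bint{b}$, $\bint{A}x \ge \bint{b}$; intersecting with the (crisp) constraint $x \ge 0$ on both sides gives $\mathcal{M} = \mathcal{M}'$. Therefore the two infima are taken over the same set with the same integrand, so $\lb{f} = \lb{f}'$.

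The main subtlety — rather than a genuine obstacle — is to make sure Theorem~\ref{thm:hladik:f} is actually applicable to the transformed program, i.e.\ that the hypotheses behind formula~\eqref{eq:lb} do not secretly require an equation-constrained form; but since the general form in the excerpt already allows both equations and inequalities, this is immediate. One should also note in passing that although $\lb{f}$ is preserved, $\ub{f}$ need not be (as Example~\ref{ex:optsol} shows), which is consistent with formula~\eqref{eq:ub} depending on the \emph{dual} feasible set $\mathcal{N}$ — and splitting a primal equation changes the dual, introducing sign-unrestricted dual multipliers where there had been free ones, so $\mathcal{N}$ genuinely grows. This remark is worth including briefly to explain why the argument is one-sided, but it is not needed for the proof of the theorem itself.
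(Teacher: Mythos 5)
Your proof is correct and follows essentially the same route as the paper: apply formula~\eqref{eq:lb} of Theorem~\ref{thm:hladik:f} to both programs, observe that the objective data are untouched, and use Theorem~\ref{thm:Li} to conclude the weakly feasible sets (and hence the infima) coincide. Your additional remark on why the argument is one-sided for $\ub{f}$ matches the paper's surrounding discussion and adds nothing that needs checking.
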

	\begin{proof}
		By Theorem~\ref{thm:hladik:f}, the best optimal value $\lb{f}$ of a general interval linear program can be found by minimizing the fixed objective function $c_c^T x - c_\Delta^T \lvert x \rvert$ over the set of all weakly feasible solutions. Since the applied transformation of splitting an equation into two inequalities does not change the weakly feasible set of an interval system (see Theorem~\ref{thm:Li}), the value of $\lb{f}$ remains the same.
	\end{proof}
	
	By the property of strong duality in classical linear programming, we can also derive analogous results for the second considered transformation. For this case, let us first show that substituting the difference of two non-negative variables for a free variables can change the best-case bound $\lb{f}$, as well as the set of optimal values.
	
	\begin{example}
		Consider the dual interval linear program to~\eqref{eq:ex}, which can be rewritten into a minimization form as
		\begin{equation}\label{eq:ex:2}
		\begin{array}{lrrrr@{\ }l}
		\text{minimize } & \multicolumn{5}{l}{-y_2} \\
		\text{subject to } & [0,1]y_1 & & & \le & -1,\\
		&  -y_1 & + & y_2 & \le & 0,\\
		& &  \multicolumn{2}{r}{y_2} &\le & 0.
		\end{array}
		\end{equation}
		The optimal value range of program~\eqref{eq:ex:2} is the interval $[1, \infty)$. Let us now substitute the term $y_1^+-y_1^-$ with $y_1^+, y_1^- \ge 0$ for the free variable $y_1$. Analogously to the previous example, the set of optimal values changes and the best-case bound $\lb{f} = 1$ is no longer valid (again, the value $0$ becomes optimal).
	\end{example}
	
	However, we can show that the transformation preserves the worst-case bound $\ub{f}$. The proof uses the notion of strong feasibility, which also provides a characterization of finiteness of the bound $\ub{f}$, as stated in Theorem~\ref{thm:strfeas}. An interval linear program is said to be \emph{strongly feasible}, if each scenario of the program is feasible.
	
	\begin{theorem}[\cite{Hladik:2009}]\label{thm:strfeas}
		An interval linear program (in the general form) is strongly feasible if and only if $\ub{f} < \infty$.
	\end{theorem}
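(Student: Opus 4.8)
The plan is to prove the two implications separately. One direction is immediate: if the ILP is not strongly feasible, then some scenario $(A,b,c)$ has an empty feasible set, so its optimal value is $+\infty$ by our convention, and hence $\ub f = +\infty$. This is exactly the contrapositive of ``$\ub f < \infty \implies$ strongly feasible'', so the work lies entirely in the converse.

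So assume the ILP (in the general form) is strongly feasible; we must show $\ub f < \infty$. The key observation is that strong feasibility lets us pass to the dual scenario-wise. Since every primal scenario is feasible, LP duality gives that its optimal value equals the dual optimal value: this is finite when the dual scenario is feasible, and equals $-\infty$ (primal unbounded) when the dual scenario is infeasible; crucially, the dual scenario can never be unbounded, since that would force the primal scenario to be infeasible, contradicting strong feasibility. Taking the supremum over all scenarios and using that the right-hand-side vector $b=(b_1,b_2)$ does not occur in the dual constraints (only in the dual objective), one obtains
\[
\ub f = \sup\{\, b_c^T y + b_\Delta^T |y| : y \in \mathcal N \,\},
\]
where $\mathcal N$ is the weakly feasible set of the dual ILP. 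This is precisely the expression in formula~\eqref{eq:ub} of Theorem~\ref{thm:hladik:f}, but here it is re-derived \emph{without} presupposing $\ub f < \infty$. Writing out an explicit description of $\mathcal N$ by optimizing out the interval matrix coefficients through the usual centre/radius formulas is the routine part of this step.

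It remains to rule out that this supremum equals $+\infty$. I would decompose $\mathcal N$ according to the orthants of the $y$-space: on each orthant $O$ the term $|y|$ is linear, so $\mathcal N \cap O$ is a polyhedron and $b_c^T y + b_\Delta^T |y|$ restricts to a linear functional on it. If the supremum were $+\infty$, then, there being only finitely many orthants, for some $O$ the polyhedron $\mathcal N\cap O$ is nonempty and the objective is unbounded above on it, hence there is a recession direction $d\in O$ of $\mathcal N\cap O$ with $b_c^T d + b_\Delta^T|d| > 0$. I would then build a scenario by selecting, componentwise, the matrix entries attaining $\min_{A\in\bint A}(A^T d_1)$, $\min_{C\in\bint C}(C^T d_2)$, etc., together with the right-hand side attaining $\max_{b\in\bint b}(b^T d)$. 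For that scenario, $d$ turns into a Farkas certificate — a direction $(d_1,d_2)$ with $d_2\le 0$, $A^T d_1 + C^T d_2 \le 0$, $B^T d_1 + D^T d_2 \le 0$, and $b_1^T d_1 + b_2^T d_2 > 0$ — witnessing that the primal program of that scenario is infeasible, contradicting strong feasibility. Hence $\ub f < \infty$.

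I expect the main obstacle to be the reverse implication, and within it the conceptual point that $\ub f = +\infty$ can a priori occur in two distinct ways: via a single infeasible scenario, or via the finite optimal values being unbounded above. The first is disposed of immediately, but the second is excluded only by the duality plus recession-direction/Farkas argument above; once that correspondence is set up correctly, the orthant bookkeeping and the extreme-point selection of the certificate scenario are routine.
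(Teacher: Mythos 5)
The paper does not prove this statement at all: Theorem~\ref{thm:strfeas} is imported verbatim from the cited reference \cite{Hladik:2009}, so there is no in-paper argument to compare yours against. Judged on its own terms, your reconstruction is correct and is essentially the standard route to this result. The easy direction (an infeasible scenario forces $\ub{f}=\infty$) is handled properly, and the hard direction is organized the right way: under strong feasibility, scenario-wise LP duality (with the convention that an unbounded primal corresponds to an infeasible dual, and noting the dual can never be unbounded) legitimately yields $\ub{f}=\sup\{b_c^Ty+b_\Delta^T|y|:y\in\mathcal{N}\}$ without presupposing finiteness, since $\bint{b}$ enters only the dual objective and the union of the dual feasible sets over all scenarios is exactly $\mathcal{N}$. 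The orthant decomposition then correctly reduces unboundedness of this supremum to a recession direction $d$ of a polyhedron $\mathcal{N}\cap O$ with $b_c^Td+b_\Delta^T|d|>0$; the componentwise extremal choice of $A,B,C,D$ (which can be realized by a single matrix per block because the minimizer of $A^Td_1$ depends only on the sign pattern of $d_1$) and of $b$ turns $d$ into a Farkas certificate of primal infeasibility for that scenario, contradicting strong feasibility. The steps you label routine (the Gerlach-type description of $\mathcal{N}$ making each $\mathcal{N}\cap O$ a polyhedron, and the fact that a linear functional unbounded above on a nonempty polyhedron admits an improving recession direction) are indeed routine. The only point worth making explicit in a full write-up is the degenerate case $\mathcal{N}=\emptyset$, where every scenario is unbounded and $\ub{f}=-\infty$, which is consistent with the claim.
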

	
	\begin{theorem}
		Substituting $x = x^+ - x^-$ with $x^+, x^- \ge 0$ for a free variable $x$ does not change the worst optimal value $\ub{f}$ of a minimization ILP.
	\end{theorem}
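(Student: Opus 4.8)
The plan is to mirror the proof of Theorem~\ref{thm:optval:AC}, but on the dual side, via strong duality in classical linear programming. Write the program, restricted to the single free variable $x$ that is being split (the rest of the program coming along unchanged), as $P$: minimize $\bint{c}^T x$ subject to $\bint{A}x\le\bint{b}$ with $x$ free, and let $P'$ be the transformed program, i.e. minimize $\bint{c}_1^T x^+-\bint{c}_2^T x^-$ subject to $\bint{A}_1 x^+-\bint{A}_2 x^-\le\bint{b}$, $x^+,x^-\ge0$, where $\bint{A}_1=\bint{A}_2=\bint{A}$ and $\bint{c}_1=\bint{c}_2=\bint{c}$ but the two copies are allowed to vary independently over scenarios. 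First I would record the easy inclusion: every scenario $(A,b,c)$ of $P$ is reproduced as the scenario of $P'$ in which both copies of the duplicated column equal $A$ and both copies of the objective coefficient equal $c$, and the correspondence $x\leftrightarrow(x^+,x^-)=(\max(0,x),-\min(0,x))$ is a value-preserving map between the feasible sets of these two scenarios. Hence $\ub{f}$ can only grow under the transformation, and in particular an infeasible scenario of $P$ forces an infeasible scenario of $P'$.

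Next I would show that finiteness of $\ub{f}$ is preserved in both directions, which by Theorem~\ref{thm:strfeas} amounts to $P$ being strongly feasible if and only if $P'$ is. One direction is the last observation above, in contrapositive form. For the converse, suppose some scenario $A_1 x^+-A_2 x^-\le b$, $x^+,x^-\ge0$ of $P'$ is infeasible; Farkas' lemma then yields $u\ge0$ with $A_1^T u\ge0$, $A_2^T u\le0$ and $b^T u<0$. Because $u\ge0$, for each column index $j$ the quantity $(A^T u)_j$ sweeps out the whole interval $[(\lb{A}^T u)_j,(\ub{A}^T u)_j]$ as the $j$-th column of $A$ ranges over $\bint{A}$, and this interval contains $0$ since $(\ub{A}^T u)_j\ge(A_1^T u)_j\ge0$ and $(\lb{A}^T u)_j\le(A_2^T u)_j\le0$. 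Choosing each column so that $(A^T u)_j=0$ produces $A\in\bint{A}$ with $A^T u=0$; together with $u\ge0$ and $b^T u<0$, Farkas' lemma makes the scenario $Ax\le b$ of $P$ infeasible, so $P$ is not strongly feasible. This step — ruling out that the new scenarios of $P'$ create infeasibility out of a strongly feasible $P$ — is the main obstacle, and the argument above is essentially the dual shadow of Theorem~\ref{thm:Li}.

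Finally I would treat the case $\ub{f}<\infty$, the case $\ub{f}=\infty$ being settled by the previous paragraph. Here I would apply formula~\eqref{eq:ub} of Theorem~\ref{thm:hladik:f} to both $P$ and $P'$: since the right-hand side $\bint{b}$ is untouched by the transformation, its center and radius are the same for both programs, so it suffices to show that the weakly feasible set $\mathcal{N}$ of the dual ILP coincides for $P$ and for $P'$. By linear programming duality the dual of $P$ is the ILP ``maximize $\bint{b}^T y$ subject to $\bint{A}^T y=\bint{c}$, $y\le0$'', whereas the dual of $P'$ is ``maximize $\bint{b}^T y$ subject to $\bint{A}_1^T y\le\bint{c}_1$, $\bint{A}_2^T y\ge\bint{c}_2$, $y\le0$'' with $\bint{A}_1=\bint{A}_2=\bint{A}$, $\bint{c}_1=\bint{c}_2=\bint{c}$ — that is, the dual of $P$ with the equation $\bint{A}^T y=\bint{c}$ split into two opposite inequalities and the dependency broken, the constraint $y\le0$ being carried along unchanged. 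Applying Theorem~\ref{thm:Li} to this subsystem gives $\mathcal{N}_P=\mathcal{N}_{P'}$, hence $\ub{f}_P=\ub{f}_{P'}$ by~\eqref{eq:ub}. The general statement (several free variables, further non-negative variables present) then follows by applying the argument to one free variable at a time.
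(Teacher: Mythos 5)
Your proof is correct and follows essentially the same route as the paper: split on whether $\ub{f}$ is finite, show that strong feasibility (equivalently, finiteness of $\ub{f}$ by Theorem~\ref{thm:strfeas}) is preserved, and then apply formula~\eqref{eq:ub} of Theorem~\ref{thm:hladik:f} together with Theorem~\ref{thm:Li} on the dual side, where the substitution becomes the splitting of an equation into two opposite inequalities. The only difference is that you establish the preservation of strong feasibility directly via a Farkas certificate (using $u\ge 0$ to collapse the two independent interval copies of each column onto a single $A\in\bint{A}$ with $A^Tu=0$), whereas the paper simply cites the equivalence of the strong feasibility conditions from the literature; your argument makes that step self-contained.
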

	\begin{proof}
		Let us denote by $\ub{f}$ and $\ub{f}^\pm$ the worst-case optimal value of the original program and the transformed program created by the introducing the substitution, respectively. 
		
		First, assume that $\ub{f} = \infty$. Since all of the original scenarios are also included in the transformed ILP, the latter is a relaxation of the original program and $\ub{f} \le \ub{f}^\pm$ holds. Therefore, we also have $\ub{f}^\pm = \infty$.
		
		Further, let $\ub{f} < \infty$ hold for the original interval program. Then, the program is strongly feasible. Since the conditions for strong feasibility of the original and the transformed program are equivalent for both equation and inequality constraints (see \cite{Hladik:2017}), the latter is also strongly feasible and,  by Theorem~\ref{thm:strfeas}, the property $\ub{f}^\pm < \infty$ holds. By formula~\eqref{eq:ub} of Theorem~\ref{thm:hladik:f} we can calculate the worst-case optimal values by optimizing the objective function $b_c^T y + b_\Delta^T \lvert y \rvert$ over the dual feasible set of the respective ILPs. Note that applying the substitution to a free variable in the primal ILP corresponds to splitting an equation into two opposite inequalities in the dual ILP. As this preserves the set of feasible solutions (Theorem~\ref{thm:Li}), the dual feasible sets of the original and the transformed program are equal, and thus $\ub{f} = \ub{f}^\pm$. 
	\end{proof}
	
	\subsection{Special Case: Fixed Coefficient Matrix}\label{ssec:spec:val}
	In Section~\ref{ssec:gen:val} we have seen that the optimal values and bounds of the optimal value range may change under some transformations. Therefore, it is natural to ask whether these properties are preserved at least for ILPs with a fixed coefficient matrix. Unfortunately, even for this special class of programs, the transformations may change the optimal value range, as shown by the following trivial examples.
	
	\begin{example}\label{ex:optval:fixed}
		Consider the following interval linear programs:
		\vskip\abovedisplayskip
		\noindent
		\begin{subequations}
			\begin{minipage}{.5\linewidth}
				\begin{equation}\label{eq:ex:3a}
				\begin{array}{lrrrr@{\ }l}
				\text{minimize } & \multicolumn{5}{l}{[0,1]x} \\
				\text{subject to } & x & \ge & 1,\\
				\end{array}
				\end{equation}
			\end{minipage}%
			\begin{minipage}{.5\linewidth}
				\begin{equation}\label{eq:ex:3b}
				\begin{array}{lrrrr@{\ }l}
				\text{minimize } & \multicolumn{5}{l}{-y} \\
				\text{subject to } & y & = & [0,1].\\
				\end{array}\qquad
				\end{equation}
			\end{minipage}
		\end{subequations}
		\vskip\belowdisplayskip\noindent
		The optimal value range of ILP~\eqref{eq:ex:3a} is the interval $[0,1]$, for ILP~\eqref{eq:ex:3b} it is the opposite interval $[-1,0]$. By substituting $x^+-x^-$ with non-negative variables $x^+, x^-$ for the free variable $x$ in \eqref{eq:ex:3a}, we introduce an unbounded scenario (setting the objective to $0x^+ - 1x^-$) and the best-case bound of the optimal value range changes to $\lb{f} = -\infty$.
		Similarly, by splitting the equation in \eqref{eq:ex:3b} into two opposite inequalities $y \le [0,1], y \ge [0,1]$, we create an infeasible scenario leading to $\ub{f} = \infty$.
	\end{example}
	
	However, note that there is an important difference between Example~\ref{ex:optval:fixed} and the previous examples with an interval coefficient matrix. While in the examples of Section~\ref{ssec:gen:val} we have seen that a transformation may cause a change in the set of optimal values, in programs \eqref{eq:ex:3a} and \eqref{eq:ex:3b} the transformations only change one of the bounds in the optimal value range due to infeasibility or unboundedness of a newly introduced scenario.
	
	Let us now consider the set of all finite optimal values of an interval linear program with a fixed coefficient matrix. The following theorems prove that even though the optimal value range may still change when transforming a~program with a fixed matrix, this can only be caused by the infinite optimal values for infeasible or unbounded scenarios and the finite optimal values remain the same.
	
	\begin{theorem}\label{thm:spec:optval}
		Transforming $Ax = \bint{b}$ into $Ax \le \bint{b}, Ax \ge \bint{b}$ does not change the set of all finite optimal values of an ILP with a fixed coefficient matrix.
	\end{theorem}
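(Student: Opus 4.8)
The plan is to obtain this essentially as a corollary of Theorem~\ref{thm:spec:AC}, which already identifies the optimal solution sets of the two programs, by additionally keeping track of the objective value attached to each optimal solution. Write $(A,\bint{b},\bint{c})$ for the original program of type~\eqref{eq:ilp:A}, let the transformed program be~\eqref{eq:transfAC} with $\bint{b}_1=\bint{b}_2=\bint{b}$, and let $\Phi$, $\Phi'$ denote their sets of \emph{finite} optimal values. We must show $\Phi=\Phi'$.

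First I would treat the inclusion $\Phi\subseteq\Phi'$, which requires nothing beyond equality of feasible sets: given a scenario $(c,b)$ of the original program with finite optimal value $v=f(A,b,c)$, the scenario of~\eqref{eq:transfAC} with objective $c$ and right-hand sides $b_1=b_2=b$ has feasible set $\{x\ge 0:Ax\le b,\ Ax\ge b\}=\{x\ge 0:Ax=b\}$, identical to the original one, and the same objective $c^Tx$, hence the same (finite) optimal value $v$, so $v\in\Phi'$. For the reverse inclusion $\Phi'\subseteq\Phi$, I would take a finite optimal value $v$ of~\eqref{eq:transfAC}; since the corresponding linear program is feasible and bounded, the optimum is attained, say by $x'$ feasible for a scenario with objective $c\in\bint{c}$ and right-hand sides $b_1,b_2\in\bint{b}$, with $c^Tx'=v$ and $b_2\le Ax'\le b_1$. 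This is exactly the configuration handled inside the proof of Theorem~\ref{thm:spec:AC}: there is $b_3\in[b_2,b_1]\subseteq\bint{b}$ with $Ax'=b_3$, and $x'$ is optimal for the type~\eqref{eq:ilp:A} scenario with matrix $A$, right-hand side $b_3$ and objective $c$; as the objective is unchanged, the optimal value of that scenario is $c^Tx'=v$, which is finite, so $v\in\Phi$.

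The only point that needs care — and the closest thing to an obstacle — is the word ``finite'': I must interpret a finite optimal value as corresponding to a scenario that is both feasible and bounded, so that an optimal solution $x'$ genuinely exists and the construction of $b_3$ in the argument above can be carried out (this is just the fundamental theorem of linear programming). Conceptually there is no difficulty: in contrast with the general-matrix examples of Section~\ref{ssec:gen:val}, splitting the equation can never manufacture a new \emph{finite} optimal value, precisely because every optimal solution of a split scenario already satisfies $Ax'=b_3$ for some $b_3\in\bint{b}$ while retaining the same objective value. The same proof, with $\bint{b}$ and $A$ playing symmetric roles in the two inclusions, shows the set of finite optimal values is preserved in both directions of the transformation.
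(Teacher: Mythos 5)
Your proposal is correct and follows essentially the same route as the paper: the forward inclusion is immediate because the split program contains every original scenario, and the reverse inclusion rests on the observation that any optimal $x'$ of a split scenario satisfies $Ax'=b_3$ for some $b_3\in[b_2,b_1]\subseteq\bint{b}$ and is then optimal (with the same objective value) for the equality-constrained scenario with right-hand side $b_3$. Your added remark that finiteness of the optimal value guarantees attainment of the optimum is a point the paper leaves implicit, but it is the same argument.
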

	\begin{proof}
		Clearly, all optimal values of the original program remain optimal in the transformed program. Let a solution $x^*$ be optimal for a scenario given by $c \in \bint{c}$ and $b_1, b_2 \in \bint{b}$ in the transformed program. It is easy to see that $x^*$ is also optimal for the scenario determined by $c$ and $b_3 = Ax^*$ of the original program, since it has the same objective function and a restricted feasible set. Therefore, the optimal value $c^T x^*$ of the transformed ILP is also optimal for the original ILP.
	\end{proof}
	
	\begin{theorem}
		Substituting $x = x^+ - x^-$ with $x^+, x^- \ge 0$ for a free variable $x$ does not change the set of all finite optimal values of an ILP with a fixed coefficient matrix.
	\end{theorem}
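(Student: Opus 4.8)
The plan is to mirror the two-part argument of Theorem~\ref{thm:spec:nonneg}, but this time keeping track of the objective values and not merely of the solution vectors. I would first record that a finite optimal value corresponds to a scenario that is both feasible and bounded, so by the fundamental theorem of linear programming such a scenario attains its optimum; hence whenever we speak of a finite optimal value we may work with an actual optimal solution. I would also note that the transformation leaves the (now larger) matrix $[A,-A]$ fixed and the right-hand-side interval equal to $\bint{b}$, so the scenarios of the transformed program~\eqref{eq:transfBC} are exactly the triplets $(c_1,c_2,b)\in\bint{c}\times\bint{c}\times\bint{b}$, and no new scenarios are created on the right-hand side.

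Next I would show that every finite optimal value of the original program is a finite optimal value of~\eqref{eq:transfBC}. Let $v$ be attained as the optimum of the scenario $(c,b)$ with $c\in\bint{c}$, $b\in\bint{b}$, and let $x$ be an optimal solution. Using the decomposition from the first part of the proof of Theorem~\ref{thm:spec:nonneg} — set $x^+=\max(0,x)$ and $x^-=-\min(0,x)$ — the pair $(x^+,x^-)$ is optimal for the scenario of~\eqref{eq:transfBC} with objective vector $(c,-c)$ and right-hand side $b$, and its objective value is $c^T x^+ - c^T x^- = c^T(x^+-x^-) = c^T x = v$. Hence $v$ is a finite optimal value of the transformed program.

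Conversely, I would take a finite optimal value $v$ of~\eqref{eq:transfBC}, attained by an optimal solution $(x^+,x^-)$ of the scenario $(c_1,c_2,b)$. By linear programming duality there is a dual feasible $y$ with $c_1^T x^+ - c_2^T x^- = b^T y$; dual feasibility gives $c_2 \le A^T y \le c_1$, so $A^T y = c_3$ for some $c_3 \in [c_2,c_1] \subseteq \bint{c}$. The complementary-slackness computation carried out in the second part of the proof of Theorem~\ref{thm:spec:nonneg} shows precisely that $x = x^+ - x^-$ satisfies $c_3^T x = b^T y$, together with $Ax \le b$, $A^T y = c_3$, $y \le 0$; that is, $x$ and $y$ form a primal--dual optimal pair for the original scenario with objective $c_3$ and right-hand side $b$. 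Consequently this original scenario has finite optimal value $c_3^T x = b^T y = c_1^T x^+ - c_2^T x^- = v$, so $v$ is a finite optimal value of the original program.

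Combining the two inclusions yields the claim. I expect the only substantive work to be the bookkeeping of objective values in the converse direction, and this is already done inside the proof of Theorem~\ref{thm:spec:nonneg}; the one point deserving a word of care is that a finite optimal value genuinely arises from an attained optimum, so that the duality argument applies — which follows from feasibility and boundedness of the corresponding scenario.
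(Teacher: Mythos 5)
Your argument is correct, but it takes a genuinely different route from the paper. The paper's proof is two sentences long: by strong duality, an ILP and its dual ILP have the same set of finite optimal values scenario by scenario, and the substitution $x = x^+ - x^-$ in a type~\eqref{eq:ilp:B} program corresponds exactly to splitting the dual equation $A^T y = \bint{c}$ into the pair $A^T y \le \bint{c}_1$, $A^T y \ge \bint{c}_2$, so the statement falls out as a corollary of Theorem~\ref{thm:spec:optval}. You instead work directly in the primal, re-running both halves of the proof of Theorem~\ref{thm:spec:nonneg} while keeping track of objective values: the decomposition $x^+=\max(0,x)$, $x^-=-\min(0,x)$ preserves the value trivially in one direction, and in the other direction the dual multiplier $y$, the vector $c_3 = A^T y \in [c_2,c_1]\subseteq\bint{c}$, and the complementary-slackness identity $c_3^T(x^+-x^-) = b^T y = c_1^T x^+ - c_2^T x^-$ exhibit an attained optimum of the original program with the same value. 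Both arguments are sound. The paper's reduction is shorter and makes the duality between the two transformations explicit; your direct version is self-contained (it needs neither Theorem~\ref{thm:spec:optval} nor the meta-level identification of the dual ILP's finite optimal values) and has the additional merit of producing the concrete original scenario $(c_3,b)$ at which the value $v$ is attained. Your care in noting that a finite optimal value of an LP is actually attained, so that the duality and complementary-slackness machinery applies, is well placed; that fact is used in both directions and is only implicit in the paper.
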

	\begin{proof}
		By strong duality of linear programming, the sets of finite optimal values of an interval linear program and its dual are the same. As introducing the substitution in the primal ILP corresponds to the transformation of splitting an equation into two inequalities in the dual ILP, applying Theorem~\ref{thm:spec:optval} yields the result.
	\end{proof}
	
	\section{Conclusion}\label{sec:concl}
	We addressed the dependency problem in transforming interval linear programs using the techniques known from classical linear programming. We showed that while it is possible to switch the objective of an interval linear program or add slack variables to convert inequalities into equations, other transformations are not always applicable to interval programs without affecting some of their properties.
	
	Namely, we considered three commonly used forms of interval linear programs. It was shown that the set of all optimal solutions may change, in general, under the transformations among these forms. Therefore, we also studied a special class of interval programs with a fixed coefficient matrix, for which we proved that all of the transformations preserve the optimal set.
	
	Furthermore, we also studied the effect of the transformations on the set of optimal values and the optimal value range of an interval linear program. We proved that the best-case optimal value $\lb{f}$ of a minimization program remains the same when an equation is split into two opposite inequalities, while the worst-case optimal value $\ub{f}$ is preserved when substituting the difference of two non-negative variables for a free variable. The complementary results do not hold, even in the case of a fixed coefficient matrix. However, the set of all finite optimal values does not change for transformations on the special class of programs.
	
	The results allow us to generalize the theory that was derived for a particular form of ILPs to all other types that can be obtained by a transformation respecting the studied properties. We believe that they also provide a better insight into the nature of the dependency problem in interval optimization.

	%\begin{acknowledgements}
	%If you'd like to thank anyone, place your comments here
	%and remove the percent signs.
	%\end{acknowledgements}
	
	% BibTeX users please use one of
	%\bibliographystyle{spbasic}      % basic style, author-year citations
	%\bibliographystyle{spmpsci}      % mathematics and physical sciences
	%\bibliographystyle{spphys}       % APS-like style for physics
\bibliographystyle{abbrv}      % basic style, author-year citations
	\bibliography{sor2017}   % name your BibTeX data base
	
\end{document}